\newtheorem{theorem}{Theorem}[section]
\newtheorem{corollary}[theorem]{Corollary}
\newtheorem{lemma}[theorem]{Lemma}
\newtheorem{definition}[theorem]{Definition}
\numberwithin{equation}{section}
\def\NN{{\mathbb N}}
\def\<{\left<}
\def\>{\right>}
\newcommand{\be}{\begin{enumerate}}
\newcommand{\ee}{\end{enumerate}}
\newcommand{\bd}{\begin{description}}
\newcommand{\ed}{\end{description}}
\begin{document}
\title{The additive structure of the squares inside rings}
\author{David Cushing\\ George Stagg}

\maketitle
\begin{abstract}
When defining the amount of additive structure on a set it is often convenient to consider certain sumsets; Calculating the cardinality of these sumsets can elucidate the set's underlying structure. We begin by investigating finite sets of perfect squares and associated sumsets. We reveal how arithmetic progressions efficiently reduce the cardinality of sumsets and provide estimates for the minimum size, taking advantage of the additive structure that arithmetic progressions provide. We then generalise the problem to arbitrary rings and achieve satisfactory estimates for the case of squares in finite fields of prime order. Finally, for sufficiently small finite fields we computationally calculate the minimum for all prime orders.
\end{abstract}

\section{Introduction}
In \cite{Cr}, problem $4.6,$ T. Tao and B. Green ask how small can $|A+A|$ be for an $n$-element subset $A$ of the set of squares of integers. This question originates from the paper \cite{Ch} where it is shown that $|A+A| > c n(n{\rm ln}(n))^{\frac{1}{12}}$ for some absolute constant $c>0.$ In our paper we obtain upper bounds on how small $|A+A|$ can be. See \cite{Ci} for another paper investigating this problem.
\\
\\
We generalise this problem to arbitrary rings and give satisfactory answers for fields of a prime order. We also show connections between the original problem and the question of the existence of perfect cuboids.

\section{Definitions and notation}
The sumset (also known as the Minkowski sum) of two sets, $A$ and $B$, is defined as the set of all possible results from summing an element of $A$ and an element of $B$. We are primarily concerned with the case when A is a finite set of the natural numbers and when $B=A$.

\noindent Let $A$ be a finite subset of the natural numbers. We can then define the sumset of $A$ as
$$A+A:=\{a+b|a,b\in A\}.$$ 
Define $S(R)$ to be the set of squares in a given ring $R$,
$$S(R):=\{a^{2}|a\in R\}.$$
We aim to minimise the sumset of finite subsets of $S(R)$,
$$N_{n}(R):=\inf_{A\subset S(R),|A|=n}|A+A|,$$
where the size of the subset is $n\in\mathbb{N}$. We will provide upper bounds for $N_{n}(R)$ and compute $N_{n}(\mathbb{Z})$ for sufficiently small n.

\section{Sumsets in integral domains}

We are primarily interested in estimating $N_{n}(\mathbb{Z}).$ However it is easier to work over $\mathbb{Q}$ than $\mathbb{Z}$. In this section we show that
$$N_{n}(\mathbb{Z})=N_{n}(\mathbb{Q}).$$
Furthermore we show that it is possible to generalise this property to integral domains and their field of fractions.
\begin{definition}
Let $R$ be a commutative ring such that for all $a,b\in R$ such that $a\neq0$ and $b\neq 0$ then $a\cdot b\neq 0.$ We say that $R$ is an integral domain.
\end{definition}

\begin{definition}
Let $R$ be an integral domain. For $a,b \in R$ let $\frac{a}{b}$ denote the equivalence class of fractions where $\frac{a}{b}$ is equivalent to $\frac{c}{d}$ if and only if $ad = bc$. The field of fractions of $R$ is the set of all such equivalence classes with the obvious operations.
\end{definition}

\begin{theorem}
Let $R$ be an integral domain. Then there exists a field $F$ such that
$$N_{n}(R)=N_{n}(F)$$
for all $n\in\mathbb{N}.$ Furthermore if $R$ has characteristic $c$ then $F$ can be chosen to have characteristic $c$ also.
\end{theorem}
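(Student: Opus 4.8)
The plan is to take $F$ to be the field of fractions of $R$, which exists precisely because $R$ is an integral domain, and which by the standard construction has the same characteristic as $R$ (the canonical homomorphism $\mathbb{Z}\to F$ factors through $\mathbb{Z}\to R$, so its kernel is unchanged). With this choice it remains only to prove the two inequalities $N_n(F)\le N_n(R)$ and $N_n(R)\le N_n(F)$.

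For the first inequality I would use the embedding $R\hookrightarrow F$, which carries $S(R)$ into $S(F)$: if $x=a^{2}$ with $a\in R$, then regarding $a$ as an element of $F$ exhibits $x\in S(F)$. Hence every admissible set $A\subset S(R)$ with $|A|=n$ is also an admissible set for $F$, and since the embedding is additive the sumset $A+A$ has the same cardinality computed in $R$ or in $F$. Taking the infimum over all such $A$ gives $N_n(F)\le N_n(R)$.

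The substance is the reverse inequality, and the idea is to clear denominators. Given a finite $A=\{x_{1},\dots,x_{n}\}\subset S(F)$ with $|A|=n$, write each $x_{i}=(a_{i}/b_{i})^{2}$ with $a_{i},b_{i}\in R$ and $b_{i}\neq 0$, and set $d:=b_{1}b_{2}\cdots b_{n}\in R\setminus\{0\}$. For each $i$ the element $d/b_{i}=\prod_{j\neq i}b_{j}$ lies in $R$, so $(d/b_{i})a_{i}\in R$ and therefore $d^{2}x_{i}=\big((d/b_{i})a_{i}\big)^{2}\in S(R)$; thus $d^{2}A:=\{d^{2}x : x\in A\}\subset S(R)$. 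Because $d\neq 0$ in the field $F$, multiplication by $d^{2}$ is an additive bijection of $F$, so $|d^{2}A|=|A|=n$ and $d^{2}A+d^{2}A=d^{2}(A+A)$ has the same cardinality as $A+A$. Consequently $N_n(R)\le|d^{2}A+d^{2}A|=|A+A|$, and taking the infimum over all admissible $A\subset S(F)$ yields $N_n(R)\le N_n(F)$, completing the proof.

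I do not expect a genuine obstacle here; the only point requiring a little care is the denominator-clearing step in an arbitrary integral domain, where neither greatest common divisors nor least common multiples need exist — using the plain product $d=\prod b_{i}$ avoids this entirely, and the finiteness of $A$ is exactly what makes this product well-defined. One should also confirm that "admissible set for $R$" means a subset of $S(R)$ with the sumset taken inside $R$, which is consistent with the computation inside $F$ via the embedding used above.
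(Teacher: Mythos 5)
Your proposal is correct and follows essentially the same route as the paper: take $F$ to be the field of fractions of $R$, note the easy inequality from the inclusion $S(R)\subset S(F)$, and clear denominators by multiplying a set $A\subset S(F)$ by the square of the product of all denominators, which is exactly the paper's set $A'$. Your version is if anything slightly more careful, since you take infima rather than assuming a minimizing $A$ exists and you make the additive-bijection step explicit.
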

\begin{proof}
Let $F$ be the field of fractions of $R$. Note that $F$ and $R$ have the same characteristic.
\\
\\
Let $n\in\NN$
\\
\\
Since $R\subset F$ it is clear that 
$$N_{n}(R)\geq N_{n}(F).$$
Let $A\subset S(F)$ such that $|A| = n$ and $|A+A| = N_{n}(F).$ Suppose that
$$A = \left\{\left(\frac{a_{1}}{b_{1}}\right)^{2},\ldots,\left(\frac{a_{n}}{b_{n}}\right)^{2}\right\},$$
where $a_{1},\ldots,a_{n},b_{1},\ldots,b_{n}\in R.$
Consider the set
$$A' = \left\{(a_{1}\times b_{2}\times\cdots\times b_{n})^{2},\ldots, (a_{n}\times b_{1}\times\cdots\times b_{n-1})^{2} \right\}.$$
Then
$$|A'+A'| = N_{n}(F).$$
Thus 
$$N_{n}(R)\leq N_{n}(F).$$
\end{proof}
\begin{lemma}
Let $F$ be a field and $G$ a subfield of $F$. Then
$$N_{n}(G)\geq N_{n}(F)$$
for all $n\in\mathbb{N}.$
\end{lemma}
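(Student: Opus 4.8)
The plan is to observe that this inequality is an immediate consequence of the inclusion $S(G)\subseteq S(F)$, together with the fact that the additive structure on $G$ is simply inherited from $F$; the bulk of the "work" is just being careful about what the infimum ranges over.

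First I would note that, since $G$ is a subfield of $F$, every element of $G$ is an element of $F$, so for each $a\in G$ the element $a^{2}$ lies in $S(F)$. Hence $S(G)\subseteq S(F)$. Consequently any finite subset $A\subseteq S(G)$ is also a subset of $S(F)$, and of the same cardinality. Moreover the addition on $G$ is the restriction of the addition on $F$, so the sumset $A+A$, and in particular the number $|A+A|$, is the same whether $A$ is regarded as sitting inside $S(G)$ or inside $S(F)$. In other words, $|A+A|$ depends only on $A$ itself, not on the ambient field.

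Then I would take infima. Fix $n\in\NN$. By the previous paragraph, every $A\subseteq S(G)$ with $|A|=n$ occurs as a competitor in the infimum defining $N_{n}(F)$, so $N_{n}(F)\leq|A+A|$ for every such $A$. Taking the infimum over all such $A$ yields
$$N_{n}(F)\leq\inf_{A\subseteq S(G),\,|A|=n}|A+A|=N_{n}(G).$$
If $G$ is finite and $n>|S(G)|$ there are no such $A$, so $N_{n}(G)$ is an infimum over the empty set and the inequality holds trivially; otherwise the argument above applies verbatim.

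I do not expect any real obstacle here. The only point that deserves a moment's attention is the observation that $|A+A|$ is intrinsic to $A$ and does not change when we enlarge the ambient field — this is precisely what makes the passage to a larger field able only to decrease the infimum, and it is the analogue of the inclusion $N_{n}(R)\geq N_{n}(F)$ already used in the proof of the preceding theorem.
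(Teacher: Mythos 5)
Your proof is correct and follows essentially the same idea as the paper: $S(G)\subseteq S(F)$, so every $n$-element competitor for $N_{n}(G)$ is also a competitor for $N_{n}(F)$, and the infimum over the larger class can only be smaller. In fact your writeup is the cleaner one, since the paper's own two-line proof has the roles of $F$ and $G$ garbled (it picks $A\subset S(F)$ and then asserts $A\subset S(G)$), whereas you quantify correctly and even handle the empty-competitor case for small finite $G$.
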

\begin{proof}
Let $n\in\NN.$ Let $A\subset S(F)$ such that $|A| = n$ and $|A+A| = N_{n}(F).$
\\
\\
Since $A\subset S(G)$ we have that $N_{n}(G)\geq N_{n}(F)$. 
\end{proof}
\begin{corollary}
Let $R$ be an integral domain of characteristic $0$. Then
$$N_{n}(R)\leq N_{n}(\mathbb{Z}).$$
\end{corollary}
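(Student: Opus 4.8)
The plan is to chain the Theorem and the Lemma together, with $\mathbb{Q}$ acting as an intermediate field. First I would invoke the Theorem on $R$: since $R$ is an integral domain, there is a field $F$ (its field of fractions) with $N_{n}(R)=N_{n}(F)$ for all $n\in\NN$, and since $R$ has characteristic $0$, so does $F$. Next I would use the elementary fact that a field of characteristic $0$ contains $\mathbb{Q}$ as its prime subfield; thus $\mathbb{Q}$ is (isomorphic to) a subfield of $F$, and the Lemma applies to give $N_{n}(\mathbb{Q})\geq N_{n}(F)$.

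It then remains to relate $N_{n}(\mathbb{Q})$ to $N_{n}(\mathbb{Z})$. This is exactly the Theorem applied with $R=\mathbb{Z}$: the field of fractions of $\mathbb{Z}$ is $\mathbb{Q}$, which has characteristic $0$, so $N_{n}(\mathbb{Z})=N_{n}(\mathbb{Q})$. Stringing the three relations together yields
$$N_{n}(R)=N_{n}(F)\leq N_{n}(\mathbb{Q})=N_{n}(\mathbb{Z}),$$
which is the claim.

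I do not expect any genuine obstacle here; the corollary is essentially a bookkeeping consequence of the two preceding results. The only point requiring a word of care is the identification of the prime subfield of a characteristic-$0$ field with $\mathbb{Q}$, so that "subfield" is meant in the sense required by the Lemma — but this is standard, and one can make it explicit by noting that the unique ring homomorphism $\mathbb{Z}\to F$ is injective (as $F$ has characteristic $0$) and extends to an injection $\mathbb{Q}\hookrightarrow F$ by the universal property of the field of fractions.
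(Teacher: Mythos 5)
Your argument is correct and matches the paper's proof: the paper likewise takes $F$ from the Theorem, notes that a characteristic-$0$ field contains a copy of $\mathbb{Q}$, and leaves the chain $N_{n}(R)=N_{n}(F)\leq N_{n}(\mathbb{Q})=N_{n}(\mathbb{Z})$ implicit in "the result easily follows." You have simply spelled out the same steps, including the identification $N_{n}(\mathbb{Z})=N_{n}(\mathbb{Q})$ via the Theorem applied to $\mathbb{Z}$.
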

\begin{proof}
Let $F$ be a field of characteristic $0$ such that $N_{n}(R)=N_{n}(F)$. Since $F$ has characteristic 0 it contains a subfield isomorphic to $\mathbb{Q}.$ The result easily follows. 
\end{proof}

\section{Properties of the elliptic curve $E_{d}:\:y^{2}=x^{3}-d^2x$}
In this section we use the results of arithmetic progressions of squares from \cite{Co}.
\\
\\
Let $d\in\mathbb{N}.$ Consider the elliptic curve
$$E_{d}:\:y^{2}=x^{3}-d^{2}x.$$
Define 
$$E_{d}[\mathbb{Q}]=\{(x,y)\in E_{d}|x,y\in\mathbb{Q}\}.$$
The set $E_{d}[\mathbb{Q}]$ is deeply connected to locating arithmetic progressions of 3 rational squares. We now demonstrate this.
\begin{lemma}
Let $P=(x,y)\in E_{d}[\mathbb{Q}]$ and set
$$a=\frac{x^{2}-2dx-d^2}{2y},$$
$$b=\frac{x^{2}+d^2}{2y},$$
$$c=\frac{-x^{2}-2dx+d^2}{2y}.$$
Then $\{a^{2},b^{2},c^{2}\}$ is an arithmetic progresion of lentgh 3 in $\mathbb{Q}$ with common difference d.
\end{lemma}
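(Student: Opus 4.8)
The plan is to verify the claim by direct computation, checking two things: first that $b^2 - a^2 = d$ and $c^2 - b^2 = d$ (so that $a^2, b^2, c^2$ is an arithmetic progression with common difference $d$), and second that the point $P = (x,y)$ actually lies on $E_d$, which is where the relation $y^2 = x^3 - d^2 x$ gets used to simplify the expressions. Since $a$, $b$, $c$ are given by explicit rational formulas in $x$, $y$, $d$, everything reduces to polynomial algebra once we clear the common denominator $2y$.

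First I would compute $b^2 - a^2$. Writing $a = \frac{x^2 - 2dx - d^2}{2y}$ and $b = \frac{x^2 + d^2}{2y}$, we have
$$b^2 - a^2 = \frac{(x^2+d^2)^2 - (x^2 - 2dx - d^2)^2}{4y^2}.$$
The numerator factors as a difference of squares: $\bigl((x^2+d^2) - (x^2-2dx-d^2)\bigr)\bigl((x^2+d^2)+(x^2-2dx-d^2)\bigr) = (2dx + 2d^2)(2x^2 - 2dx) = 4d(x+d)\cdot x(x-d) = 4d(x^3 - d^2 x)$. Now invoke that $P \in E_d$, so $x^3 - d^2 x = y^2$; the numerator becomes $4d y^2$ and the whole expression collapses to $d$. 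The computation of $c^2 - b^2$ is entirely analogous: the numerator $(-x^2 - 2dx + d^2)^2 - (x^2 + d^2)^2$ again factors as a difference of squares into $(-2x^2 - 2dx)(-2dx + 2d^2) = 4x(x+d) \cdot d(d - x)$... one must track the sign carefully here, but it again reduces to $4d(x^3 - d^2x) = 4dy^2$, giving $c^2 - b^2 = d$.

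I would also note the degenerate case: the formulas require $y \neq 0$, i.e. $P$ is not one of the $2$-torsion points $(0,0)$, $(\pm d, 0)$; this should be stated as a hypothesis or remarked upon. The main obstacle is nothing deep — it is simply bookkeeping with signs in the factorization of the second difference $c^2 - b^2$, where the leading terms come in with minus signs, so I would expand carefully rather than by pattern-matching. Everything else is a mechanical verification, and the one genuinely substantive input is the single substitution $x^3 - d^2 x = y^2$ coming from membership in $E_d$.
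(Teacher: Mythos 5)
Your proposal is correct and follows essentially the same route as the paper: compute $b^2-a^2$ and $c^2-b^2$, factor each numerator as a difference of squares to get $4d(x^3-d^2x)$, and substitute $y^2=x^3-d^2x$ from membership in $E_d$ to obtain the common difference $d$. Your additional remark that the formulas require $y\neq 0$ (excluding the $2$-torsion points) is a sensible hypothesis the paper leaves implicit.
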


\begin{proof}
Observe that
\begin{align*}
b^{2}-a^{2} & =\left(\frac{x^{2}+d^2}{2y}\right)^{2}-\left(\frac{x^{2}-2dx-d^2}{2y}\right)^{2}
\\
& = \left(\frac{2x^{2}-2dx}{2y}\right)\left(\frac{2d^2+2dx}{2y}\right)
\\
& = \left(\frac{x^{2}-dx}{y}\right)\left(\frac{d^2+dx}{y}\right)
\\
& = \frac{dx^3-d^{3}x}{y^2}
\\
& = \frac{d(x^3-d^{2}x)}{x^3-d^{2}x}
\\
& = d.
\end{align*}
Similarly
\begin{align*}
c^{2}-b^{2} & =\left(\frac{-x^{2}-2dx+d^2}{2y}\right)^{2}-\left(\frac{x^{2}+d^2}{2y}\right)^{2}
\\
& = \left(\frac{2d^{2}-2dx}{2y}\right)\left(\frac{-2x^2-2dx}{2y}\right)
\\
& = \left(\frac{d^{2}-dx}{y}\right)\left(\frac{-x^2-dx}{y}\right)
\\
& = \frac{dx^3-d^{3}x}{y^2}
\\
& = \frac{d(x^3-d^{2}x)}{x^3-d^{2}x}
\\
& = d.
\end{align*}
\end{proof}

\begin{lemma}
Let $a,b,c\in\mathbb{Q}$ such that $\{a^{2},b^{2}c^{2}\}$ is an arithmetic progression of length 3 with common difference d then 
$$\left(\frac{d(c-b)}{a-b},\frac{d^2(2b-a-c)}{(a-b)^{2}}\right)\in E_{d}[\mathbb{Q}].$$
\end{lemma}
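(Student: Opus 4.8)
The plan is to verify that the claimed point lies on $E_d$ by a direct substitution, after recording the two identities the hypothesis supplies. Saying that $\{a^2,b^2,c^2\}$ is an arithmetic progression of length $3$ with common difference $d$ means precisely $b^2-a^2=d$ and $c^2-b^2=d$; in particular $2b^2=a^2+c^2$, and this is the relation that will ultimately make everything collapse. We may assume $d\neq 0$, which is needed for $E_d$ to be an elliptic curve in any case; then $a^2\neq b^2$, so $a\neq b$, and since $a-b$ is the only quantity appearing in a denominator the coordinates $x:=\dfrac{d(c-b)}{a-b}$ and $y:=\dfrac{d^2(2b-a-c)}{(a-b)^2}$ are well-defined elements of $\QQ$. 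It then remains to check $y^2=x^3-d^2x$.

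Next I would expand $x^3-d^2x=x(x-d)(x+d)$ using $x-d=\dfrac{d(c-a)}{a-b}$ and $x+d=\dfrac{d(a+c-2b)}{a-b}$, which gives
$$x^3-d^2x=\frac{d^3(c-b)(c-a)(a+c-2b)}{(a-b)^3},\qquad y^2=\frac{d^4(a+c-2b)^2}{(a-b)^4}.$$
Subtracting these over the common denominator $(a-b)^4$ and extracting the factor $d^3(a+c-2b)$ reduces the claim $y^2=x^3-d^2x$ to the polynomial statement that $d(a+c-2b)-(c-b)(c-a)(a-b)$ is divisible by $2b^2-a^2-c^2$.

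The single genuine (and short) computation is this last reduction: substituting $d=b^2-a^2$ and expanding --- most conveniently after writing $c-a=(c-b)-(a-b)$ and $a+c-2b=(a-b)+(c-b)$ --- one obtains the clean identity
$$d(a+c-2b)-(c-b)(c-a)(a-b)=(a-b)\,(2b^2-a^2-c^2).$$
Feeding this back in yields $y^2-(x^3-d^2x)=\dfrac{d^3(a+c-2b)(2b^2-a^2-c^2)}{(a-b)^3}$, which is $0$ because $2b^2=a^2+c^2$. Hence $(x,y)\in E_d[\QQ]$.

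I do not expect a conceptual obstacle here; the only place that needs care is the sign bookkeeping in the polynomial expansion just above, together with the elementary well-definedness point already dealt with. It is worth noting that the map of this lemma is the inverse of the map of the previous lemma, so an alternative but less self-contained argument would be to check that the two constructions compose to the identity on $E_d[\QQ]$; I would still favour the direct verification, since it does not presuppose that every arithmetic progression of three rational squares with common difference $d$ arises from a rational point of $E_d$.
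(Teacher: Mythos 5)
Your proposal is correct and follows essentially the same route as the paper: a direct substitution of the point $\left(\frac{d(c-b)}{a-b},\frac{d^2(2b-a-c)}{(a-b)^2}\right)$ into $y^2=x^3-d^2x$, which the paper states as a bare verification. You merely supply the algebra the paper omits (the factorization via $x(x-d)(x+d)$ and the identity $d(a+c-2b)-(c-b)(c-a)(a-b)=(a-b)(2b^2-a^2-c^2)$, which I checked), together with the well-definedness remark $a\neq b$, so no further changes are needed.
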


\begin{proof}
One simply has to verify that
$$\frac{d^{4}(2b-a-c)^{2}}{(a-b)^{4}}-\frac{d^{3}(c-b)^{3}}{(a-b)^{3}}+\frac{d^{3}(c-b)}{a-b}=0.$$
\end{proof}
We say that the points of $E_{d}[\mathbb{Q}]$ and the arithmetic progressions in the above lemmas are associated to each other.
\\
\\
Let $P=(x,y)\in E_{d}[\mathbb{Q}].$ Define 
$$P\circ P=\left(\left(\frac{x^{2}+d^2}{2y}\right)^{2},Y\right).$$
Where $Y$ is chosen such that  $P\circ P\in E_{d}[\mathbb{Q}].$

\section{Sumsets of squares in $\mathbb{Z}$}
We now turn our attention to estimating $N_{n}(\mathbb{Z})$. Let $N_{n} = N_{n}(\mathbb{Z}) = N_{n}(\mathbb{Q}).$
\\
\\
It is not diffictult to show that
$$2n-1\leq N_{n}\leq \frac{n(n+1)}{2}.$$
This instantly gives that $N_{1}=1$ and $N_{2}=3.$ We also see that 
$$5\leq N_{3}\leq 6.$$
To compute the value of $N_{3}$ let  
$$A=\{1,25,49\}.$$ 
We have
$$A+A=\{1,26,50,64,98\}. $$
In this case $|A+A|=5.$ Which shows that $N_{3}=5.$
\\
\\
 This happened because $\{1,25,49\}$ is arithmetic progression. It turns out that arithmetic progressions are an efficient way to make the sumset small and infact an arithmetic progression is how we obtain the minumum bound of $2|A|-1.$ 
\\
\\
Fermat showed that there exists no airthmetic progression of 4 or more squares so this method does not generalise. 
\\
\\
Although there exists no arithmetic progression of squares of length 4 we do have the following interesting set
$$A=\{49,169,289,529\}.$$
 $A$ is an arithmetic progression of squares of length 5 with the 4th term removed. Since $|A+A|=8$ we obtain that $N_{4}=8$ (because $N_{4}=7$ implies the existence of an arithmetic progression of 4 squares).
\\
\\
Let $n\in\mathbb{N}.$ We will calculate an upperbound for $N_{3n}$ and then state similar results for $N_{3n+1}$ and $N_{3n+2}.$ 
\\
\\
Before we begin note the crude upper bound we already have.
$$N_{3n}\leq \frac{3n(3n+1)}{2}\sim \frac{9}{2}n^{2}.$$
We improve this with
\begin{theorem}\label{ub}
Let $n\in\mathbb{N}.$ Then
$$N_{3n}\leq \frac{5n(n+1)}{2}\sim\frac{5}{2}n^{2}.$$
\end{theorem}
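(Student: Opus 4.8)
The plan is to build an extremal set from $n$ three-term arithmetic progressions of rational squares \emph{that all share one common difference} $d$; it is precisely this shared common difference that keeps the sumset small. Suppose we have such progressions, the $i$-th being $\{u_i,\,u_i+d,\,u_i+2d\}$ with $u_i,u_i+d,u_i+2d$ all rational squares, and suppose they are pairwise disjoint, so that $A:=\bigcup_{i=1}^{n}\{u_i,u_i+d,u_i+2d\}$ has exactly $3n$ elements. Every element of $A+A$ is then of the form $u_i+u_j+md$ with $1\le i\le j\le n$ and $0\le m\le 4$, so
\[ A+A\subseteq\bigl\{\,u_i+u_j+md\ :\ 1\le i\le j\le n,\ 0\le m\le 4\,\bigr\}, \]
which has at most $5\binom{n+1}{2}=\tfrac{5n(n+1)}{2}$ elements. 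Since $N_{3n}=N_{3n}(\QQ)$ we may work over $\QQ$ and clear denominators at the very end, so this would complete the proof.

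It remains to arrange the disjointness and to manufacture $n$ such progressions for a single $d$. Disjointness is free: by Fermat's theorem there are no four rational squares in arithmetic progression, so two distinct three-term progressions of squares with the same common difference $d$ cannot share an element (their union would contain four squares in arithmetic progression with difference $d$). For the supply of progressions we use Section 4 and \cite{Co}: a three-term arithmetic progression of rational squares with common difference $d$ is associated to a rational point of $E_d$, and the operation $P\circ P$ --- which is, up to sign, duplication on $E_d$ --- carries such a progression to another one with the \emph{same} common difference, having middle term $\bigl(\tfrac{x^{2}+d^{2}}{2y}\bigr)^{2}$. Hence it suffices to fix one $d$ for which $E_d[\QQ]$ contains a point $P$ of infinite order: then $P,\ P\circ P,\ (P\circ P)\circ(P\circ P),\dots$ are infinitely many distinct rational points, their middle terms take infinitely many values, and we obtain infinitely many distinct three-term progressions of rational squares of common difference $d$, from which we select any $n$.

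Concretely I would take $d=6$: the right triangle with legs $3,4$ and hypotenuse $5$ has area $6$, which unwinds to the progression $\bigl(\tfrac12\bigr)^{2},\bigl(\tfrac52\bigr)^{2},\bigl(\tfrac72\bigr)^{2}$ of common difference $6$, associated to the point $(-3,9)$ of $E_6[\QQ]$, which has infinite order. (Any congruent number, e.g. $5$ or $7$, works just as well.) The one genuine obstacle is this input: the existence of a $d$ for which $E_d$ has positive rank --- equivalently, of a congruent number --- together with the check that the chosen seed point is non-torsion; both are classical, and are exactly the sort of statement recorded in \cite{Co}. Everything else is the bookkeeping of the first paragraph, Fermat's theorem, and the two lemmas of Section 4. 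Finally, the companion bounds for $N_{3n+1}$ and $N_{3n+2}$ follow from the same construction by adjoining one or two further rational squares to $A$ and recounting $|A+A|$.
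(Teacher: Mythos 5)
Your proposal is correct and is essentially the paper's own argument: the paper likewise takes the union of $n$ pairwise disjoint three-term arithmetic progressions of rational squares with one fixed common difference, generated by repeatedly duplicating a point on the curve $E_d$ (the paper seeds with $\{1,25,49\}$ and $E_{24}$ rather than your $d=6$), and bounds $|A+A|$ by the same count $5\binom{n+1}{2}=\frac{5n(n+1)}{2}$. Your added checks (disjointness via Fermat's theorem and the non-torsion seed point) only make explicit steps the paper leaves implicit.
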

\begin{proof}
We will construct a set $A\subset S(\mathbb{Q}),$ $|A|=3n$ such that 
$$|A+A|\leq \frac{5n(n+1)}{2}.$$
We first prove the case for $n=1.$ Let $A_{1}=\{1,25,49\}.$ Then $|A_{1}+A_{1}|=5.$
\\
\\
Now suppose that $n\geq 2.$ 
\\
\\
Let $P_{1}$ be the point of $E_{24}[\mathbb{Q}]$ associated to $A_{1}.$
\\
\\
Let $P_{i+1}=P_{i}+P_{i}$ for $1\leq i\leq n-1.$ Let $A_{i}$ be the arithmetic progression associated with $P_{i}$ for each $2\leq i \leq n.$
\\
\\
Let 
$$A=\bigcup_{i=1}^{n}A_{i}.$$
We have that $A\subset S(\mathbb{Q})$ and $|A|=3n.$ We claim that 
$$|A+A|\leq\frac{5n(n+1)}{2}.$$
Note that if $B$ and $C$ are two arithmetic progressions of length 3 and the same common difference such that $B\cap C=\emptyset$ then $|B+C|=5.$ 
\\
\\
Therefore
\begin{align*}
|A+A| & = \left|\left(\bigcup_{i=1}^{n}A_{i}\right)+\left(\bigcup_{i=1}^{n}A_{i}\right)\right|
\\
& \leq \sum_{k=1}^{n}|A_{k}+A_{k}|+\sum_{i=1}^{n-1}\sum_{j>i}|A_{i}+A_{j}|
\\
& = \frac{5n(n+1)}{2}.
\end{align*}
\end{proof}
Using a simlar technique we obtain the following result.
\begin{theorem}\label{ub2}
Let $n\in\mathbb{N}.$ Then
$$N_{3n}\leq\frac{5n^{2}+n}{2},$$
$$N_{3n+1}\leq\frac{5n^{2}+9n+2}{2},$$
$$N_{3n+2}\leq\frac{5n^{2}+13n+6}{2}.$$
\end{theorem}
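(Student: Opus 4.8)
The plan is to refine the construction from Theorem~\ref{ub} by not demanding that every block of three squares comes from a full arithmetic progression of length~3, but instead allowing the first (and, for the $+1$ and $+2$ cases, the last) block to be chosen more cleverly so that it overlaps the remaining blocks in a more economical way. Concretely, for $N_{3n}$ I would keep $n$ arithmetic progressions $A_1,\dots,A_n$ of squares with a common difference, all pairwise disjoint, exactly as in Theorem~\ref{ub}, and then scrutinise the double-counting estimate $|A+A|\le\sum_k|A_k+A_k|+\sum_{i<j}|A_i+A_j|$ more carefully: the point is that the various sumsets $A_i+A_j$ are not themselves disjoint. Because all the $A_i$ share the same common difference $d$, the three-term progressions $A_i=\{u_i-d,u_i,u_i+d\}$ have $A_i+A_j=\{u_i+u_j-2d,\;u_i+u_j-d,\;u_i+u_j,\;u_i+u_j+d,\;u_i+u_j+2d\}$, so $A_i+A_j$ depends only on $u_i+u_j$; exploiting coincidences among the sums $u_i+u_j$ (for instance arranging the $u_i$ themselves to be in arithmetic progression, which is possible since the group law on $E_d$ lets us slide along a chain of associated progressions) collapses many of these five-element blocks onto shared values and shaves the quadratic constant from $5n(n+1)/2$ down to $(5n^2+n)/2$.

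First I would record the elementary fact that if $A=\{u-d,u,u+d\}$ and $B=\{v-d,v,v+d\}$ are two $3$-term progressions with common difference $d$, then $A+B$ is the $5$-term progression with common difference $d$ centred at $u+v$, hence $|A+B|=5$ when $A\cap B=\emptyset$ and, more importantly, $A+B=A'+B'$ whenever $u+v=u'+v'$. Next I would invoke Lemma~\ref{...} (the one producing, from $P\in E_d[\QQ]$, a $3$-term progression of squares with common difference $d$) together with the doubling map $P\circ P$ to produce a sequence $A_1,\dots,A_n$ of such progressions whose centres $u_1,\dots,u_n$ can be taken to themselves lie in an arithmetic progression — or at least so that the multiset $\{u_i+u_j:1\le i\le j\le n\}$ has size $2n-1$ rather than the generic $\binom{n+1}{2}$. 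Then the sumset decomposition gives $|A+A|\le |\bigcup_{i\le j}(A_i+A_j)|$, and since each $A_i+A_j$ is the $5$-term progression centred at $u_i+u_j$, the union is controlled by how many distinct centres occur and how much consecutive $5$-term blocks overlap; carrying out this bookkeeping yields $(5n^2+n)/2$. The $N_{3n+1}$ and $N_{3n+2}$ bounds follow by adjoining one or two extra squares (equivalently, a truncated fourth block, in the spirit of the example $\{49,169,289,529\}$) to the configuration and re-running the count, absorbing the extra cross terms into the stated polynomials.

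The main obstacle, I expect, is the arithmetic-geometry input: I need to guarantee that one can actually realise $n$ disjoint $3$-term progressions of \emph{rational squares} with a \emph{common} difference whose centres are in arithmetic progression (or have the stated additive-energy property), for every $n$. Having a single such progression with difference $d$ is the statement that $E_d$ has a rational point; producing a whole family with the centres controlled requires that the relevant points on $E_d[\QQ]$ (or on a chain of curves linked by the $\circ$ operation) be genuinely distinct and yield distinct, disjoint progressions — this is where I would lean on the results of \cite{Co} on arithmetic progressions of squares and on the infinite order of the constructed points under the group law, exactly as Theorem~\ref{ub} implicitly does with $E_{24}$ and $A_1=\{1,25,49\}$. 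The remaining work — verifying the $5$-term-progression overlap count and the two endpoint cases — is routine bookkeeping once the construction is in place.
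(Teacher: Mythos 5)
Your key mechanism cannot work, and in fact part of the statement you are targeting is not provable as printed. In your construction every block $A_i=\{u_i-d,u_i,u_i+d\}$ is a three-term progression of \emph{squares}, so its centre $u_i$ is itself a square (it is the middle square $b_i^2$ coming from the lemma on $E_d[\mathbb{Q}]$). Your proposed saving comes from forcing the centres $u_1,\dots,u_n$ into an arithmetic progression, or at least forcing $|\{u_i+u_j:\ i\le j\}|=2n-1$; but for a finite set $U$ of reals one has $|U+U|=2|U|-1$ exactly when $U$ is an arithmetic progression, so either version of your hypothesis would produce $n$ squares in arithmetic progression, which Fermat's theorem (quoted in the paper) forbids for $n\ge 4$. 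Hence the collapse of the five-term blocks that you rely on to pass from $\frac{5n(n+1)}{2}$ to $\frac{5n^2+n}{2}$ is unavailable; moreover the doubling chain $P_{i+1}=P_i\circ P_i$ produces centres of rapidly growing height, and an overlap of two blocks requires the difference of two centre-sums to be a nonzero integer multiple of $d$ of absolute value at most $4d$, which essentially never happens there — so you have no mechanism at all for the roughly $2n$ coincidences you need. Worse, the first inequality is false as printed: at $n=1$ it asserts $N_3\le 3$, contradicting $|A+A|\ge 2|A|-1$ and the paper's own computation $N_3=5$, and at $n=2$ it asserts $N_6\le 11=2\cdot 6-1$, which would force six squares in arithmetic progression. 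So $\frac{5n^2+n}{2}$ must be a misprint (the construction of Theorem \ref{ub} gives $\frac{5n^2+5n}{2}$), and no argument can establish it literally; the correct move is to flag the formula, not to engineer coincidences toward it.

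For the $3n+1$ and $3n+2$ bounds (which are consistent with the small cases) the ``similar technique'' needs no overlaps between distinct cross-sumsets; what it needs is that the extra squares share the \emph{common difference} of all the blocks. Concretely: run the Theorem \ref{ub} chain on $E_{120}$ starting from the point associated to $\{49,169,289\}$, so every $A_i$ has common difference $120$, and enlarge that one block to $B_1=\{49,169,289,529\}$, which sits inside the same difference-$120$ progression with its fourth term deleted. Then $|B_1+B_1|\le 8$, $|B_1+A_j|\le 7$ for each $j\ge 2$, $|A_i+A_j|\le 5$ otherwise, and the count $8+7(n-1)+5\binom{n}{2}=\frac{5n^2+9n+2}{2}$ is exactly the stated bound. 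Your version — adjoin an extra square to the $d=24$ configuration and ``absorb the cross terms'' — yields only $\frac{5n^2+11n+2}{2}$, because a square not aligned with the common difference contributes up to $3n+1$ new sums; and you cannot rescale $\{49,169,289,529\}$ to difference $24$, since $120/24=5$ is not a rational square. This compatibility of differences is the real content of the ``$+1$'' (and similarly ``$+2$'') cases, and it is absent from your sketch.
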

\section{Possible ways for small sumsets}
It is a famous problem as to whether or not a perfect cuboid exists. A perfect cuboid is a cuboid with integer sides, intger faces and an integer valued long diagonal. It is conjectured that such an object does not exist. However if a perfect cuboid does exist then there exists integers $a,b,c,d,e,f,g$ such that
\begin{align*}
a^{2}+b^{2} & =d^{2}
\\
a^{2}+c^{2} & =e^{2}
\\
b^{2}+c^{2} & =f^{2}
\\
a^{2}+b^{2}+c^{2} & =g^{2}.
\end{align*} 
This is a lot of additive structure amongst squares. In fact the existence of a perfect cuboid was equivalent to the existence of a 3x3 magic square such that every entry is a square and to the notion of a generalised arithmetic progression.   
\begin{definition}
Let $a,n_{1},...,n_{d},N_{1},...,N_{d}\in\mathbb{N}.$ Set 
$$A=\{a+m_{1}n_{1}+...+m_{d}n_{d}|0\leq m_{i}\leq N_{i}\}.$$
We say the $A$ is a generalised arithmetic progression. More specifically $A$ is a $(N_{1}-1)\times\ldots\times(N_{d}-1)$ generalised arithmetic progression of dimension $d$. 
\end{definition}
A $3\times 3$ generalised arithmetic progression is equivalent to a 3x3 magice square. Let $A$ be a 3x3 generalised arithmetic progression. Then $|A|=9$ and $|A+A|=25$ Note that this is $5$ lower than our upperbound for $N_{9}$ obtained in Theorem $\ref{ub}.$
\\
\\
The trick to calculating $N_{4}$ was finding 4 elements inside an arithmetic progression of length 5.  Although a 3x3 GAP of squares has not been found a 3x3 Gap with 7 of its elements square has been found.
\begin{center}
{\begin{picture}(180,180)(0,0)
\put(0,0){\line(0,1){180}}
\put(60,0){\line(0,1){180}}
\put(120,0){\line(0,1){180}}
\put(180,0){\line(0,1){180}}
\put(0,0){\line(1,0){180}}
\put(0,60){\line(1,0){180}}
\put(0,120){\line(1,0){180}}
\put(0,180){\line(1,0){180}}
\put(20,150){$373^{2}$}
\put(80,150){$289^2$}
\put(140,150){$565^2$}
\put(20,90){360761}
\put(90,90){$425^2$}
\put(140,90){$23^2$}
\put(20,30){$205^2$}
\put(80,30){$527^2$}
\put(140,30){222121}
\end{picture}}
\end{center}

 This gives us a candidate to lower our upper bound for $N_{7}.$ Theorem \ref{ub2} gives us that $N_{7}\leq 20.$ If we let $A$ be the 7 squares in the above magic square then $|A+A|=19.$ Thus showing that our upperbound is not optimal.

\section{Fields of prime order}
We now turn to our attention to sets of squares inside a finite field. In particular to fields with a prime order. The following inequality gives us a lower bound for $N_{n}(\mathbb{Z}_{p}).$
\begin{theorem}[Cauchy-Davenport inequality]
If p is a prime and $A$ is a set in $\mathbb{Z}_{p}$ then
$$|A+A|\geq min(2|A|-1,p).$$
\end{theorem}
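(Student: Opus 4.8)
The plan is to establish the general Cauchy--Davenport statement, namely that $|A+B|\geq\min(|A|+|B|-1,\,p)$ for all nonempty $A,B\subseteq\mathbb{Z}_p$, and then specialise to $B=A$. I would argue by induction on $|B|$ using the Dyson (``$e$-'') transform.

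First I would clear away the case $|A|+|B|>p$. For an arbitrary target $c\in\mathbb{Z}_p$ the sets $A$ and $c-B$ have sizes summing to more than $p$, so they intersect; any common element exhibits $c$ as a member of $A+B$. Hence $A+B=\mathbb{Z}_p$ and the bound holds with value $p$. So from now on assume $|A|+|B|\leq p$, and the goal reduces to $|A+B|\geq|A|+|B|-1$.

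The base case $|B|=1$ is trivial, since then $A+B$ is a translate of $A$. For the inductive step, given $e\in\mathbb{Z}_p$ set $A_e=A\cup(B+e)$ and $B_e=B\cap(A-e)$. Two routine facts drive everything: (i) $A_e+B_e\subseteq A+B$ (split into the cases $x\in A$ and $x\in B+e$ for $x\in A_e$, using that $y\in B_e$ forces $y+e\in A$); and (ii) $|A_e|+|B_e|=|A|+|B|$ (inclusion--exclusion after translating one of the two sets by $e$). If one can choose $e$ with $1\leq|B_e|\leq|B|-1$, then $(A_e,B_e)$ is a strictly smaller instance, the inductive hypothesis gives $|A_e+B_e|\geq|A_e|+|B_e|-1=|A|+|B|-1$, and (i) finishes the argument.

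The crux --- and the only place primality enters --- is producing such an $e$. Nonemptiness of $B_e$ holds for every $e\in A-B$, a nonempty set. For the bound $|B_e|\leq|B|-1$, suppose it fails: then $B_e=B$, i.e.\ $B+e\subseteq A$, for every $e\in A-B$. Fixing $b_0\in B$ and taking $e=a-b_0$ for each $a\in A$, this forces $A+(b-b_0)\subseteq A$, hence $A+(b-b_0)=A$, for every $b\in B$. Since $|B|\geq 2$ we may pick $b\neq b_0$; then $A$ is invariant under translation by the nonzero element $b-b_0$, which generates $\mathbb{Z}_p$ because $p$ is prime, so $A=\mathbb{Z}_p$, contradicting $|A|\leq p-|B|<p$. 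If the transform bookkeeping proved awkward, the fallback is the Alon--Nathanson--Ruzsa polynomial method: from $|A+A|\leq 2|A|-2<p$ one builds a nonzero polynomial of degree at most $2|A|-2$ vanishing on $A\times A$ and contradicts the Combinatorial Nullstellensatz with a monomial of degree $(|A|-1)+(|A|-1)$.
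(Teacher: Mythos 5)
Your proof is correct. Note, though, that the paper does not prove this statement at all: it quotes the Cauchy--Davenport inequality as a classical black-box result (alongside Vosper's theorem) and only uses it to supply the lower bound $N_n(\mathbb{Z}_p)\geq 2n-1$, so there is no in-paper argument to compare against. What you supply is the standard elementary proof via the Dyson $e$-transform, and the details check out: the reduction of the case $|A|+|B|>p$ by intersecting $A$ with $c-B$, the two transform identities $A_e+B_e\subseteq A+B$ and $|A_e|+|B_e|=|A|+|B|$, and the use of primality exactly once --- to rule out the degenerate situation $B+e\subseteq A$ for all $e\in A-B$, which would force $A$ to be invariant under a nonzero translation and hence equal to all of $\mathbb{Z}_p$. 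The induction bookkeeping is also fine, since $|A_e|+|B_e|=|A|+|B|\leq p$ keeps the reduced instance inside the case being proved. Your fallback via the Combinatorial Nullstellensatz (Alon--Nathanson--Ruzsa) is likewise a valid route; it is shorter once the Nullstellensatz is available, and it generalises to restricted sumsets, whereas the transform argument is self-contained and entirely elementary, which is in keeping with the level of machinery the paper actually needs from this theorem.
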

We will show that this bound can be attained for all $n\in\mathbb{N}$ for $p$ sufficiently large. We do this by applying an inverse theorem to the Cauchy-Davenport inequality.
\begin{theorem}[Vosper]
Let $p$ be a prime and $A$ a set in $\mathbb{Z}_{p}$ such that $|A|>2$ and $|A+A|\leq p-2.$ Then $|A+A|=2|A|-1$ if and only if $A$ is an arithmetic progression.
\end{theorem}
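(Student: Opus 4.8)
The converse direction is a routine check and carries no content: if $A=\{a,a+d,\dots,a+(n-1)d\}$ with $d\neq 0$, then $A+A=\{2a,2a+d,\dots,2a+(2n-2)d\}$, a set of exactly $2n-1=2|A|-1$ elements as soon as $2n-1\le p$; and the standing hypothesis $|A+A|\le p-2$ forces $2n-1\le p-2<p$, so there is no wrap-around and the count is exact. The whole substance is the forward implication, which is precisely the stability (``inverse'') companion of the Cauchy--Davenport inequality quoted above: a set whose doubling is as small as that inequality permits must be as structured as possible. I would actually prove the more symmetric statement that if $A,B\subseteq\mathbb{Z}_p$ with $|A|,|B|\ge 2$ and $|A+B|=|A|+|B|-1\le p-2$, then $A$ and $B$ are arithmetic progressions with a common difference; the theorem is the case $B=A$.

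The plan is an induction on $m=\min(|A|,|B|)$, driven by the Dyson $e$-transform $A_e=A\cup(B+e)$, $B_e=B\cap(A-e)$, which satisfies $|A_e|+|B_e|=|A|+|B|$ and $A_e+B_e\subseteq A+B$; since $|A|+|B|\le p-1$, Cauchy--Davenport then forces $(A_e,B_e)$ to be again a critical pair with $|A_e+B_e|=|A+B|$. The base case $m=2$ is the heart of the elementary argument and is clean: write $B=\{b_0,b_1\}$, put $d=b_1-b_0\neq 0$, and set $P=A+b_0$. From $|A+B|=|A|+1$ one gets $|P\cup(P+d)|=|P|+1$, i.e.\ $|(P+d)\setminus P|=1$. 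Arranging $\mathbb{Z}_p$ as the single cycle $0,d,2d,\dots$ (legitimate since $p$ is prime and $d\neq 0$), this says $P$ has exactly one ``run'', hence $P$, and therefore $A$, is an arithmetic progression with common difference $d$, which matches $B$. (One needs $|A|<p$, which follows from $|A|\le|A+B|\le p-2$.)

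For the inductive step assume without loss of generality $|A|\ge|B|\ge 3$. A short count produces an $e$ with $1\le|B_e|\le|B|-1$: the set of $e$ with $B_e\neq\emptyset$ is $A-B$, of size $\ge|A|+|B|-1$ by Cauchy--Davenport, while the set of $e$ with $B_e=B$ is $\{e:B+e\subseteq A\}$, of size $\le|A|-|B|+1$ (again by Cauchy--Davenport), so at least $2|B|-2\ge 4$ values of $e$ remain. For such an $e$ the pair $(A_e,B_e)$ is critical with $|B_e|<|B|=m$, so when $|B_e|\ge 2$ the inductive hypothesis makes $A_e$ and $B_e$ arithmetic progressions with a common difference $d$. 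Because $|A_e+B_e|=|A+B|$, this shows $A+B$ is an arithmetic progression, say of difference $d$ and length $\ell=|A|+|B|-1\le p-2$. Rescaling by $d^{-1}$ we may take $d=1$, so $A+B$ is a block of $\ell$ consecutive residues; then $A\subseteq(A+B)-b_0$ and $B\subseteq(A+B)-a_0$ both lie in blocks of $\ell\le p-2$ consecutive residues, and after translating $A$ and $B$ independently we may regard them as honest integer sets inside $\{1,\dots,\ell\}\subseteq\{1,\dots,p-1\}$. If $2\ell\le p-1$ the sum is computed in $\mathbb{Z}$ with no wrap-around, so $|A+B|=|A|+|B|-1$ holds over $\mathbb{Z}$, and the elementary one-dimensional inverse theorem (order the elements and read off $\min A+\min B,\dots,\max A+\max B$) forces both $A$ and $B$ to be arithmetic progressions of difference $1$.

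The delicate points are exactly two degenerations. First, in the inductive step one may be forced into $|B_e|=1$; then $A_e$ is merely a translate of $A+B$, and this configuration must be analysed by hand. Second, in the pull-back the range $2\ell>p$ is left over, which corresponds to $|A|+|B|$ close to $p$, i.e.\ to a very small complement $\mathbb{Z}_p\setminus(A+B)$; here I would apply the same machinery to the pair $(\,\mathbb{Z}_p\setminus(A+B),\,-B\,)$, whose criticality follows from the disjointness $(\mathbb{Z}_p\setminus(A+B))-B\subseteq\mathbb{Z}_p\setminus A$ together with Cauchy--Davenport. Both degenerations are precisely where the hypothesis $|A+B|\le p-2$ --- ``the sumset misses at least two elements'' --- is indispensable, and I expect the bookkeeping of these extremal cases, rather than any new idea, to be the main obstacle. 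An alternative that avoids the hand-to-hand combinatorics is to invoke Kneser's theorem (to see the stabiliser of $A+B$ is trivial in $\mathbb{Z}_p$) together with Kemperman's structure theorem for critical pairs, of which this statement is the $\mathbb{Z}_p$ specialisation.
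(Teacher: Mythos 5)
First, note that the paper does not prove this statement at all: Vosper's theorem is quoted there as a known classical result (the inverse companion of the Cauchy--Davenport inequality) and is simply applied later, so there is no internal proof to compare yours against; citing Vosper's original paper or a textbook treatment would have sufficed. Judged on its own terms, your proposal follows the standard Dyson $e$-transform route, and the parts you carry out are sound: the converse direction, the base case $\min(|A|,|B|)=2$ via counting the single ``run'' of $P$ in the $d$-cycle, the count showing some $e$ gives $1\le|B_e|\le|B|-1$, and the lift to $\mathbb{Z}$ when $2\ell\le p$ are all correct.

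However, as submitted this is an outline rather than a proof, because the two degenerations you yourself flag are left genuinely unresolved, and they are not peripheral bookkeeping --- they are the substance of Vosper's theorem. (i) Your counting argument only guarantees $1\le|B_e|\le|B|-1$; it does not prevent \emph{every} admissible $e$ from giving $|B_e|=1$, and when that happens the induction never gets started, since a critical pair with $|B_e|=1$ carries no structural information ($A_e$ is then just a translate-filled copy of $A+B$). The classical proof (e.g.\ Vosper's own argument, or the treatment in Nathanson's \emph{Additive Number Theory: Inverse Problems}) has to run a separate, rather delicate analysis of exactly this situation, typically by transforming with respect to a carefully chosen $e$ or by passing to the complement; saying ``this configuration must be analysed by hand'' is naming the gap, not closing it. (ii) Similarly, the step from ``$A+B$ is an arithmetic progression'' to ``$A$ and $B$ are arithmetic progressions'' is only justified in your write-up when $2\ell\le p$; the remaining range, where $|A|+|B|$ is close to $p$, is precisely where the hypothesis $|A+B|\le p-2$ must be exploited through the complement pair $\bigl(\mathbb{Z}_p\setminus(A+B),\,-B\bigr)$, and you sketch the criticality of that pair but do not carry the argument through (one still has to verify its cardinality hypotheses and then transfer the progression structure back to $A$). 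Until both cases are written out --- or the statement is simply cited, as the paper does --- the proof is incomplete.
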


\begin{theorem}[Van der Waerden, Gowers]\label{vdw}
 Let $r,k\in\mathbb{N}.$ Then there exists $N(r,k)\in\NN$ such that if $\{1,2,...,N(r,k)\}$ is expressed as the disjoint union of non-empty sets, $\{A_{j}\}_{j=1}^{r},$ then there exists a $j$ such that $A_{j}$ contains an arithmetic progression of length $k$. Furthermore $N(r,k)\leq 2^{2^{r^{2^{2^{9+k}}}}}.$
\end{theorem}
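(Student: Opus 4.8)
The plan is to treat the two assertions separately: the existence of a finite $N(r,k)$, which is the classical theorem of van der Waerden, and the explicit tower bound, which is the contribution of Gowers.

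For existence I would run van der Waerden's colour-focusing argument, a double induction. The outer induction is on the progression length $k$; the case $k=2$ is the pigeonhole principle, giving $N(r,2)=r+1$. For the inductive step one assumes $N(r',k-1)$ is finite for every number of colours $r'$ and proves, by an auxiliary induction on a parameter $s$ running from $1$ to $r$, that there is a length $m(s)$ such that every $r$-colouring of $\{1,\dots,m(s)\}$ either already contains a monochromatic $k$-term progression, or contains $s$ monochromatic $(k-1)$-term progressions of pairwise distinct colours that are all \emph{focused} at a single point $x$ --- meaning that appending one further common-difference step to each of them lands on $x$. The base case $s=1$ is van der Waerden for length $k-1$. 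To pass from $s$ to $s+1$ one partitions $\{1,\dots,m(s+1)\}$ into long blocks, applies the case-$s$ statement inside each block, and uses van der Waerden for length $k-1$ applied to the finitely many possible block-colourings to align $k-1$ blocks that lie in arithmetic progression and carry identical colourings; the diagonals of the $s$ focused families across these aligned blocks then form $s$ focused families in the ambient interval with a new common focus point $y$, and the focus points of the individual blocks supply the $(s+1)$st family --- unless one of the colourings already forces a monochromatic $k$-term progression, in which case we are done. Running this up to $s=r$ and then examining the colour of the final focus point $y$, which must coincide with one of the $r$ distinct colours present and so extends that $(k-1)$-progression, yields $N(r,k)\le m(r)$.

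The tower bound is not produced by this argument: the recursion for $m(s)$ nests $N(\cdot,k-1)$ roughly $r$ times, so it only gives Ackermann-type growth in $k$. Instead I would deduce the bound from Gowers' quantitative Szemer\'edi theorem, which states that any $A\subseteq\{1,\dots,N\}$ with no $k$-term arithmetic progression satisfies $|A|\le N(\log\log N)^{-2^{-2^{9+k}}}$. Given an $r$-colouring of $\{1,\dots,N\}$, pigeonhole supplies a colour class $A_j$ with $|A_j|\ge N/r$, and this is incompatible with $A_j$ being progression-free as soon as $(\log\log N)^{2^{-2^{9+k}}}>r$, i.e. $\log\log N>r^{2^{2^{9+k}}}$, i.e. $N>2^{2^{r^{2^{2^{9+k}}}}}$; for such $N$ some colour class contains a $k$-term progression, whence $N(r,k)\le 2^{2^{r^{2^{2^{9+k}}}}}$.

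The genuine obstacle is entirely in that last invocation: Gowers' theorem is deep, its proof resting on Fourier analysis on $\mathbb{Z}_N$, the generalised von Neumann inequality bounding progression counts by uniformity norms, an inverse theorem for those norms obtained through Weyl-sum estimates and Freiman-type structure theory, and a density-increment iteration. Reproducing it is far outside the present scope, so I would cite it. (For comparison, Shelah's combinatorial argument already yields a primitive-recursive bound, but one vastly larger than Gowers' fixed-height tower.) Everything else here --- the pigeonhole step and the rearrangement of the nested exponentials --- is routine.
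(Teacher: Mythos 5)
Your proposal is correct: the paper itself states this result without proof, simply attributing it to van der Waerden and Gowers, so there is no internal argument to compare against. Your two-part treatment is exactly how the bound is obtained in the literature: the colour-focusing double induction correctly establishes finiteness of $N(r,k)$, and the tower bound follows, as you compute, by pigeonholing a colour class of density at least $1/r$ and invoking Gowers' quantitative Szemer\'edi theorem, with the threshold $\log\log N > r^{2^{2^{9+k}}}$ giving $N(r,k)\le 2^{2^{r^{2^{2^{9+k}}}}}$; citing Gowers' theorem as a black box is unavoidable here and is consistent with the paper's own citation-only treatment.
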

We now prove the main result of this section.
\begin{theorem}
Let $n\in\mathbb{Z}$ and let $p>2^{2^{2^{2^{2^{9+n}}}}}$ be a prime number. Then
$$N_{n}(\mathbb{Z}_{p})=2n-1.$$
\end{theorem}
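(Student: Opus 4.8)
The plan is to establish the two bounds $N_n(\mathbb{Z}_p)\ge 2n-1$ and $N_n(\mathbb{Z}_p)\le 2n-1$ separately. The lower bound is immediate from the Cauchy--Davenport inequality: any $A\subseteq\mathbb{Z}_p$ with $|A|=n$ satisfies $|A+A|\ge\min(2n-1,p)$, and since $p$ is vastly larger than $n$ (in particular $p>2n-1$) this reads $|A+A|\ge 2n-1$; taking the infimum over $A\subseteq S(\mathbb{Z}_p)$ gives $N_n(\mathbb{Z}_p)\ge 2n-1$.

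For the upper bound it suffices to exhibit one arithmetic progression $A=\{a,a+d,\dots,a+(n-1)d\}$ of length $n$ all of whose terms are squares in $\mathbb{Z}_p$. Indeed, for such a progression $A+A=\{2a+jd:0\le j\le 2n-2\}$, and provided $p\nmid d$ and $p>2n-2$ these $2n-1$ values are pairwise distinct, so $|A+A|=2n-1$ and hence $N_n(\mathbb{Z}_p)\le 2n-1$. (One could instead route this last step through Vosper's theorem, whose hypothesis $|A+A|\le p-2$ is supplied by the crude bound $|A+A|\le n(n+1)/2<p-2$.) Thus the whole problem reduces to finding an $n$-term arithmetic progression of quadratic residues modulo $p$.

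To produce such a progression I would invoke Theorem \ref{vdw}. Put $W=N(2,n)$, so that with $r=2$ the stated bound gives $W\le 2^{2^{2^{2^{2^{9+n}}}}}<p$; consequently $1,2,\dots,W$ are pairwise distinct and nonzero in $\mathbb{Z}_p$. Colour each $m\in\{1,\dots,W\}$ by whether it is a quadratic residue modulo $p$. The residue class is nonempty (it contains $1$); if the non-residue class is empty we are done since $\{1,\dots,n\}$ is then a progression of residues, and otherwise Theorem \ref{vdw} yields a monochromatic arithmetic progression $P$ of length $n$. If $P$ consists of residues, take $A=P$. If $P$ consists of non-residues, fix a non-residue $g\in\mathbb{Z}_p$ and set $A=gP$: multiplication by $g$ carries a progression to a progression and non-residues to residues, so $A$ is an arithmetic progression of $n$ squares. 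In either case its common difference is a nonzero element of $\mathbb{Z}_p$ and, since $W<p$, the progression is ``short'' enough for the sumset computation of the previous paragraph to apply, giving $N_n(\mathbb{Z}_p)=2n-1$.

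The genuinely new idea is this reduction to a two-colouring of an initial segment of $\mathbb{N}$ by quadratic character, together with the observation that a monochromatic progression can be converted into a progression of squares simply by multiplying through by a fixed non-residue; this is the step I expect to be the crux. Everything else — the Cauchy--Davenport lower bound, the fact that the sumset of a short progression in $\mathbb{Z}_p$ has exactly $2n-1$ elements, and matching the Van der Waerden bound against the hypothesis on $p$ — is routine bookkeeping, and the small cases $n=1,2$ can be checked by hand (e.g.\ $\{1\}$ and $\{1,4\}$).
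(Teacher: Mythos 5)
Your proposal is correct and follows essentially the same route as the paper: Cauchy--Davenport for the lower bound, and a two-colouring by quadratic residuosity plus Van der Waerden (with the Gowers bound matched against the hypothesis on $p$) to extract an $n$-term arithmetic progression of squares, converting a non-residue progression by multiplying through by a non-residue exactly as the paper does with $\min P$. The only cosmetic differences are that you count the sumset of the short progression directly (and are slightly more careful about non-emptiness of the colour classes and about working inside $\{1,\dots,W\}$ with $W<p$) where the paper invokes Vosper's theorem.
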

\begin{proof}
Let $R=S(\mathbb{Z}_{p})\setminus\{0\}$ and $T=\mathbb{Z}_{p}\setminus S(\mathbb{Z}_{p}).$ Then $R$ and $T$ are disjoint non empty subsets of $\{1,2,...,p\}$ such that $R\cup T=\{1,2,...,p\}.$ Therefore, by Van der Waerdens Theorem, either $R$ or $T$ contains an arithmetic progression of length $n$. Suppose that $T$ contains such a progression and denote it by $P.$ Let $p=min{P}.$ Then one can show that $p\cdot P$ is a subset of $R$ and is an arithmetic progression of length $n$. Therefore $R$ contains an arithmetic progression of lentgh $n$ which we denote by $Q$. By Vospers theorem we have
$$|Q+Q|=2n-1.$$
This shows that $N_{n}(\mathbb{Z}_{p})\leq 2n-1.$ By the Cauchy-Davenport inequality we have that $N_{n}(\mathbb{Z}_{p})\geq 2n-1$. Thus completing the proof.    
\end{proof}
Note that the lower bound for $p$ given above is probably much much more than needed. In fact for small values of $n$ we have been able to significantly improve this lower bound. In fact we have the following by applying the results on arithmetic progressions of quadratic residues from \cite{Br}.
\begin{theorem}
$$N_{5}(\mathbb{Z}_{p})=9\:\:for\:\:p\geq 41,$$
$$N_{6}(\mathbb{Z}_{p})=11\:\:for\:\:p\geq 149,$$
$$N_{7}(\mathbb{Z}_{p})=13\:\:for\:\:p\geq 619,$$
$$N_{8}(\mathbb{Z}_{p})=15\:\:for\:\:p\geq 1087,$$
$$N_{9}(\mathbb{Z}_{p})=17\:\:for\:\:p\geq 3391.$$
\end{theorem}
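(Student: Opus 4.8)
The plan is to re-run the proof of the previous theorem, but to replace the Van der Waerden input of Theorem~\ref{vdw} by the sharp results on arithmetic progressions of quadratic residues from \cite{Br}. The lower bound needs no new idea: for $n\in\{5,6,7,8,9\}$ and $p$ at least the stated threshold one has $p>2n-1$, so the Cauchy--Davenport inequality yields $N_{n}(\mathbb{Z}_{p})\ge 2n-1$, i.e.\ $\ge 9,11,13,15,17$ respectively.

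For the upper bound I would, exactly as before, reduce to exhibiting an arithmetic progression of length $n$ all of whose terms lie in $S(\mathbb{Z}_{p})$. If $A=\{a,a+d,\ldots,a+(n-1)d\}\subset S(\mathbb{Z}_{p})$ with $d\ne 0$, then $A+A=\{\,2a+kd : 0\le k\le 2n-2\,\}$, and since $2n-2<p$ the $2n-1$ sums are pairwise distinct modulo $p$, so $|A+A|=2n-1$; alternatively one may invoke Vosper's theorem, using $|A|=n>2$ and $|A+A|\le 2n-1\le p-2$. Hence $N_{n}(\mathbb{Z}_{p})\le 2n-1$, and combined with the lower bound this gives $N_{n}(\mathbb{Z}_{p})=2n-1$ as soon as such a progression exists.

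It remains to produce an $n$-term progression inside the quadratic residues for every prime $p\ge p_{0}(n)$ with $p_{0}(n)\in\{41,149,619,1087,3391\}$, and this is exactly what I would take from \cite{Br}. Should \cite{Br} instead be phrased for the $2$-colouring of $\mathbb{Z}_{p}$ by the quadratic character, with a progression guaranteed only among the non-residues, I would multiply that progression through by its first term (a non-residue, hence nonzero): each new term is then a product of two non-residues and so a nonzero residue, the common difference stays nonzero, and distinctness is preserved, so the image lies in $S(\mathbb{Z}_{p})$ --- this mirrors the trick used in the previous proof.

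The hard part is this last ingredient: showing that the quadratic residues contain an $n$-term progression for \emph{every} prime at or above the stated value, with the sharp threshold. Positivity of the count of such progressions for all large $p$ is a routine character-sum estimate, but isolating the exact $p_{0}(n)$ rather than an asymptotic bound depends on the explicit estimates and the finite verification carried out in \cite{Br}; once that is granted, everything else is the bookkeeping above.
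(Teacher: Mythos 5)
Your argument matches the paper's: the authors likewise obtain the lower bound from Cauchy--Davenport and the matching upper bound by taking the explicit $n$-term arithmetic progressions of quadratic residues (with the sharp thresholds) from \cite{Br}, exactly as in the preceding theorem. The paper gives no further detail beyond citing \cite{Br}, so your bookkeeping, including the multiply-by-a-nonresidue trick, is the same route.
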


Therefore for small $n$ we could calculate $N_{n}(\mathbb{Z}_{p})$ for all $p$. Below are the values for $n=5$ or $6$ calculated by computer. 
\begin{theorem}
 $N_{5}(\mathbb{Z}_{p})=$ 
$\begin{cases}
 9 & for\:\: p=17,23\:\:and\:\:p\geq 41 \\ 
10 & for\:\: p =11,19,29,31,37\\
11 & for\:\:p=13.
\end{cases}$
\end{theorem}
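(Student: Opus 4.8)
\section*{Proof proposal}

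The plan is to reduce the statement to the already-established fact that $N_5(\mathbb{Z}_p)=9$ for $p\geq 41$, together with a finite case check for the small primes. First I would note that $N_5(\mathbb{Z}_p)$ is only meaningful once $|S(\mathbb{Z}_p)|\geq 5$; since $|S(\mathbb{Z}_p)|=\tfrac{p+1}{2}$ for odd $p$, this excludes $p\in\{2,3,5,7\}$ and leaves exactly the primes $p\in\{11,13,17,19,23,29,31,37\}$ as the ones not covered by the preceding theorem. For each of these the Cauchy--Davenport inequality already gives $N_5(\mathbb{Z}_p)\geq\min(2\cdot 5-1,p)=9$, so in every case it only remains to pin down the exact minimum.

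The conceptual tool for the lower bounds is Vosper's theorem. If $A\subseteq S(\mathbb{Z}_p)$ has $|A|=5$ and $|A+A|=9$, then (as $5>2$ and $9\leq p-2$ whenever $p\geq 11$) $A$ must be a $5$-term arithmetic progression; conversely any $5$-term progression of squares has sumset of size $9$. Hence $N_5(\mathbb{Z}_p)=9$ precisely when $\mathbb{Z}_p$ contains a $5$-term arithmetic progression lying entirely in $S(\mathbb{Z}_p)$. One checks that such a progression exists for $p=17$ (e.g.\ $13,15,0,2,4$, common difference $2$) and for $p=23$ (e.g.\ $0,1,2,3,4$) but for none of $p\in\{11,13,19,29,31,37\}$, which forces $N_5(\mathbb{Z}_p)\geq 10$ for those six primes. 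For $p=13$ one must do slightly more: inspecting the $\binom{7}{5}=21$ five-element subsets of $S(\mathbb{Z}_{13})=\{0,1,3,4,9,10,12\}$ shows that none has a sumset of size $10$ either, so $N_5(\mathbb{Z}_{13})\geq 11$.

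The matching upper bounds are obtained by exhibiting, for each remaining prime, an explicit extremal set: a five-element $A\subseteq S(\mathbb{Z}_p)$ with $|A+A|=10$ for $p\in\{11,19,29,31,37\}$ and with $|A+A|=11$ for $p=13$. Since the total number of five-element subsets of $S(\mathbb{Z}_p)$ over the whole range $11\leq p\leq 37$ is bounded by $\binom{19}{5}=11628$, all of this---both the production of the witnesses and the verification of the lower bounds---can be, and was, carried out by a single exhaustive computer search, with the Cauchy--Davenport bound ensuring the search need not consider $p\geq 41$.

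The main obstacle here is not mathematical depth but the trustworthiness of a brute-force argument: the crux is organising the enumeration so that the lower bounds---especially the claim $N_5(\mathbb{Z}_{13})=11$, which rules out sumsets of both size $9$ and size $10$---are checkable by hand or by an independently reproducible program, and choosing clean explicit witness sets rather than leaving the reader to rerun an opaque search. A minor point worth flagging throughout is the convention $0\in S(\mathbb{Z}_p)$, since several of the extremal configurations naturally involve $0$.
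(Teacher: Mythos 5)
Your proposal is correct and matches the paper's approach: the paper itself establishes the range $p\geq 41$ via the preceding theorem (Brown's results on progressions of quadratic residues) and settles the finitely many remaining primes purely by computer calculation, exactly as you do. Your additional Vosper-based observation (so that $N_5(\mathbb{Z}_p)=9$ iff $S(\mathbb{Z}_p)$ contains a $5$-term arithmetic progression, with explicit witnesses for $p=17,23$) is a nice structural refinement of the brute-force check, but it does not change the substance of the argument.
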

\begin{theorem}
 $N_{6}(\mathbb{Z}_{p})=$ 
$\begin{cases}
 11 & for\:\: p=11,53,61,67,71,73,79,83,89,97,101,103,\\
&107,109,127,131,137\:\:and\:\:p\geq 149 \\ 
12 & for\:\: p =17,23,41,43,47,113,139\\
13 & for\:\:p=13,19,31,37,59\\
14 & for\:\:p=29.
\end{cases}$
\end{theorem}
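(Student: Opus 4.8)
The plan is to split the statement into its infinite part, $p \ge 149$, which is already established, and the finite list of small primes, which is settled by a direct finite search.

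Observe first that $|S(\mathbb{Z}_p)| = (p+1)/2$ for an odd prime $p$, so $S(\mathbb{Z}_p)$ contains at least six elements exactly when $p \ge 11$; thus $N_6(\mathbb{Z}_p)$ is defined precisely for the primes $p \ge 11$, and $p = 11$ is the first case. For every such $p$ the Cauchy--Davenport inequality gives
$$N_6(\mathbb{Z}_p) \ge \min(2\cdot 6 - 1,\, p) = 11,$$
so $11$ is a universal lower bound. In particular, for $p \ge 149$ the matching upper bound $N_6(\mathbb{Z}_p) \le 11$ is precisely the preceding theorem (obtained from the results of \cite{Br} on arithmetic progressions of quadratic residues), so those $p$ require nothing new. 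It therefore remains to determine $N_6(\mathbb{Z}_p)$ for the thirty primes $p \in \{11, 13, 17, 19, 23, 29, 31, 37, 41, 43, 47, 53, 59, 61, 67, 71, 73, 79, 83, 89, 97, 101, 103, 107, 109, 113, 127, 131, 137, 139\}$. The case $p = 11$ is immediate: $S(\mathbb{Z}_{11})$ has exactly six elements, so $A$ is forced, and $11 \le |A+A| \le |\mathbb{Z}_{11}| = 11$ gives $N_6(\mathbb{Z}_{11}) = 11$ with no computation at all.

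For each of the remaining twenty-nine primes I would perform the exhaustive search: enumerate all six-element subsets $A \subseteq S(\mathbb{Z}_p)$, compute $|A+A|$ for each, and output the minimum value together with a witnessing set $A$. This is entirely feasible: for $p < 149$ one has $|S(\mathbb{Z}_p)| \le 70$, so there are at most $\binom{70}{6} \approx 1.3\times 10^8$ candidates, and the count can be cut by a factor of order $p$ by quotienting out the action $A \mapsto t^2 A$ for $t \in \mathbb{Z}_p \setminus \{0\}$, which preserves both $S(\mathbb{Z}_p)$ and the value $|A+A|$; one can also fix whether $0 \in A$ and prune any partial set whose partial sumset already exceeds the best value found so far. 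The genuine content of the theorem is thus the correctness of this enumeration, which is where I would take care: (a) each claimed extremal set must be independently re-checked to lie in $S(\mathbb{Z}_p)$, to have size six, and to realise the stated $|A+A|$; and (b) the search must be verified to be truly exhaustive over the reduced family, so that the values $12$, $13$, $14$ recorded for the other primes are genuine lower bounds and not merely upper bounds. The main obstacle, modest as it is, is this bookkeeping rather than any mathematical difficulty; as spot-checks I would recompute the two largest listed minima, $N_6(\mathbb{Z}_{13}) = 13$ and $N_6(\mathbb{Z}_{29}) = 14$, with a second, independent implementation.
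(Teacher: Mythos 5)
Your proposal is essentially the paper's own approach: the paper covers $p\geq 149$ by the preceding theorem (via the results of \cite{Br}) and simply reports the values for the finitely many primes below $149$ as ``calculated by computer,'' which is exactly the Cauchy--Davenport lower bound plus exhaustive finite search you describe. Your additional observations (the clean hand argument for $p=11$, the dilation symmetry $A\mapsto t^{2}A$ to speed the search, and the insistence on verifying exhaustiveness so that $12$, $13$, $14$ are true minima) are sensible refinements but do not change the method.
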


\section{Remarks}
There are still many problems in this area that we need to answer. There exists a 2x3 GAP of squares. Does there exists a 3x3 GAP. A 2x2x2 GAP. A 2x..x2 GAP? If we can find arbritrary long 2x..x2 GAPs then we can show that $N_{n}\leq K\cdot x^{2-\varepsilon}$ for some $\varepsilon>0.$ 
\\
\\
What happens for infinite integral domains of finite characteristic. Embedding a field of prime order is very unsatisfactory here as it gives no information for values of $N_{n}(R)$ beyond a certain point.
\\
\\
We need to look more at fields with a prime power number of elements. Consider $\mathbb{F}_{9}.$ Then   there exists a copy of $\mathbb{Z}_{3}\subset\mathbb{F}_{9}$ consisting entirely of squares. Since $\mathbb{Z}_{3}$ is closed under addition we obtain
$$|\mathbb{Z}_{3}+\mathbb{Z}_{3}|=|\mathbb{Z}_{3}|=3.$$
Which gives that $N_{3}(\mathbb{F}_{9})=3<5.$

\end{document}